\title[ ]
{$C_0$ Semigroup And Local Spectral Theory }
\author[ A. Tajmouati, M. Karmouni,  H. Boua,  Z. Al-homidi ]
{  A. Tajmouati, M. Karmouni,  H. Boua,  Z. Al-homidi }
\address{A. Tajmouati, M. Karmouni,  H. Boua,  Z. Al-homidi \, \newline
 Sidi Mohamed Ben Abdellah
 Univeristy
 Faculty of Sciences Dhar Al Mahraz Fez, Morocco.}
\email{abdelaziztajmouati@yahoo.fr}
\email{mohammed.karmouni@usmba.ac.ma}
\email{hamid12boua@yahoo.com}
\email{zakariya1978@yahoo.com}
\subjclass[2000]{47D03, 47A10, 47A11}
\keywords{$C_0$ Semigroup, local spectrum, SVEP, stability.}
\newtheorem{theorem}{Theorem}[section]
\newtheorem{lemma}{Lemma}[section]
\newtheorem{corollary}{Corollary}[section]
\begin{document}
\maketitle
\begin{abstract}
In this paper, we studied some local spectral properties for a $C_0$ semigroup and its generator.
Some stabilities  results are also established.
\end{abstract}

\section{Introduction}
 The semigroups can be used to solve a large class of problems commonly known as the Cauchy problem:

 $$u'(t)=Au(t), t\geq 0, u(0)=u_0  \>\>\>\>\>\>\>(1) $$

 on  a Banach space $X$. Here $A$ is a given linear operator with domain $D(A)$ and the initial value $u_0$. The solution of (1) will be given by $u(t)=T(t)u_0$ for an operator semigroup $(T(t))_{t\geq 0}$ on $X$. In this paper,  We will focus on a special class of linear
semigroups called $C_0$ semigroups which are semigroups of strongly continuous bounded linear operators. Precisely:\\
 A one-parameter family $(T(t))_{t\geq0}$ of operators on  a Banach space $X$ is called a $C_{0}$-semigroup of operators or a strongly continuous semigroup of operators if:
  \begin{enumerate}
    \item $T(0) = I$.
    \item $T(t + s) = T(t)T(s)$, $\forall t, s\geq 0$.
    \item $\displaystyle{\lim_{t\rightarrow 0}} T(t)x = x$, $\forall x\in X$.
  \end{enumerate}
 $(T(t))_{t\geq0}$ has a unique infinitesimal generator $A$ defined in domain $D(A)$ by:\\
$$D(A)=\{x\in X : \displaystyle{\lim_{t\rightarrow 0}} \frac{T(t)x-x}{t} \mbox{ exists} \}$$,
 $$Ax = \displaystyle{\lim_{t\rightarrow 0}} \frac{T(t)x-x}{t}, \forall x\in D(A)$$\\
 Recall that for all $t\geq 0$,   $T(t)$ is a bounded  linear operator on $X$ and $A$ is a closed operator. Details for all this may be found in  \cite{Pazy,engel}.
There are enough studies done on semi-groups, including spectral studies \cite{PA, Nagel, Mulk, Mulko, engel, Pazy, TB}. In this article, we investigate the transmission of some local spectral properties from a $C_0$ semi-group to its infinitesimal generator.

\section{Preliminaries}
Throughout, $X$ denotes a complex Banach space, let $A$ be a closed linear operator on $X$ with domain  $D(A)$, we denote by $A^*$,  $R(A)$, $N(A)$, $ R^{\infty}(A)=\bigcap_{n\geq0}R(A^n)$, $\sigma_{K}(A)$,  $\sigma_{su}(A)$,  $\sigma(A)$,
 respectively the adjoint, the range, the null space, the hyper-range, the semi-regular spectrum,  the surjectivity spectrum and the spectrum of $A$.\\

Recall that for  a closed linear operator $A$ and $x\in X$ the local resolvent of $A$ at $x$,  $\rho_{A}(x)$ defined as the union  of all  open subset $U$ of $\mathbb{C}$ for which there is an analytic function $f: U\rightarrow D(A)$ such that the equation $(A-\mu I)f(\mu)=x$ holds for
all $ \mu \in U$. The local spectrum $\sigma_A(x)$ of $A$
 at $x$ is defined as $\sigma_A(x)=\mathbb{C}\setminus \rho_A(x)$. Evidently  $\sigma_A(x)\subseteq\sigma_{su}(A)\subseteq\sigma(A)$, $\rho_A(x)$ is open and $\sigma_A(x)$ is closed.\\
If $f(z)=\displaystyle{\sum_{i=0}^{\infty}}x_i(z-\mu)^i$  ( in  a neighborhood of $\mu$),   be the Taylor expansion of $f$, it is easy to see that $\mu\in\rho_{A}(x)$ if and only if there exists a sequence $(x_i)_{i\geq0}\subseteq D(A)$, $(A-\mu)x_0=x$,  $(A-\mu)x_{i+1}=x_i$, and $sup_i||x_i||^{\frac{1}{i}}<\infty$, see \cite{E, Mul}.\\
The local spectral subspace of $A$  associated with a subset $\Omega$ of $\mathbb{C}$ is the set :
 $$X_A(\Omega)=\{x\in X: \sigma_A(x)\subseteq \Omega\}$$
  Evidently $X_A(\Omega)$ is a hyperinvariant subspace of $A$ not always closed.\\

Next, let $A$ a closed  linear operator, $A$ is said to have the single
valued extension property at $\lambda_{0}\in\mathbb{C}$ (SVEP) if
for every  open neighborhood   $U\subseteq \mathbb{C}$ of
$\lambda_{0}$, the only  analytic function  $f: U\longrightarrow D(A)$ which satisfies
 the equation $(A-zI)f(z)=0$ for all $z\in U$ is the function $f\equiv 0$. $A$ is said to have the SVEP if $A$ has the SVEP for
 every $\lambda\in\mathbb{C}$. Denote by $$S(A)=\{\lambda\in \mathbb{C}: A\mbox{ has not  the SVEP at } \lambda\}.$$  $S(A)$ is an open of $\mathbb{C}$
  and  $X_A(\emptyset)=\{0\}$ implies $S(A)=\emptyset$ \cite{PA}. If $A$ is bounded, then
 $X_A(\emptyset)$ is closed if and only if $X_A(\emptyset)=\{0\}$ if and only if $S(A)=\emptyset$ \cite{lau}.\\
Note that $\mu\in S(A)$ if and only if there exists a sequence $(x_i)_{i\geq0}\subseteq D(A)$ not all of them equal to zero such that
 $(A-\mu)x_{i+1}=x_i$, with $x_0=0$  and $sup_i||x_i||^{\frac{1}{i}}<\infty$, see \cite{E}.\\
For  a closed linear operator $A$ the algebraic core $C(A)$ for $A$ is the greatest subspace $M$ of $X$ for which $A(M\cap D(A))=M$. Equivalently:
$$C(A) = \{x \in D(A) : \,\,\,\, \exists (x_n)_{n\geq0} \subset  D(A) ,\,\,\mbox{ such that } x_{0}=x,   A x_n = x_{n-1} \mbox{ for all } n\geq1\}$$

Moreover the analytic core for $A$ is a linear subspace of $X$
defined by:
\begin{center}
 $K(A) = \{x \in D(A) :  \exists (x_n)_{n\geq0} \subset  D(A) \mbox{ and } \delta > 0 \mbox{ such that } x_{0}=x, A x_n = x_{n-1} \,\,\,\,\forall n \geq1 \mbox{ and } \|x_{n}\| \leq \delta^{n}\|x\|\}$
 \end{center}
The analytica core admits a local spectral characterization for unbounded operator as follow \cite[ Theorem 4.3]{PA}:
$$K(A)=\{x\in D(A): 0\in \rho_{A}(x)\}=X_{A}(\mathbb{C}\setminus\{0\}\}$$
 Note that in general neither  $K(T)$ nor $C(T)$ are closed and we have $$X_{A}(\emptyset)\subset K(A)\subseteq C(A)\subset R^{\infty}(A) \subset R(A).$$

 Let $(T(t))_{t\geq0}$, a $C_0$ semigroup with infinitesimal generator $A$,  we introduce the following operator acting on $X$ and depending on the parameters $\lambda \in \mathbb{C}$ and $t\geq 0$:
\begin{center}
$B_\lambda(t)x=\int_0^t e^{\lambda (t-s)}T(s)xds,  x\in X  $
\end{center}
It is well known  that $B_\lambda(t)$ is a bounded linear operator on $X$ \cite{Pazy, engel} and we have:
\begin{center}
$(e^{\lambda t}-T(t))^nx =(\lambda -A)^nB^n_\lambda(t)x,\;\;\;   \forall x\in X,  n\in\mathbb{N}$ \\
$(e^{\lambda t}-T(t))^nx =B^n_\lambda(t)(\lambda -A)^n x,\;\;\;   \forall x\in D(A^n),   n\in\mathbb{N}$;\\
      $R^{\infty}(e^{\lambda t}-T(t))\subseteq R^{\infty}(\lambda -A)$;\\
      $N((\lambda -A)^n)\subseteq N(e^{\lambda t}-T(t))^n.$
\end{center}
In \cite{Pazy,engel}, they showed that:
$$e^{t\nu (A)}\subseteq  \nu(T(t))$$
where $\nu \in \{\sigma_p,\sigma_{ap} ,\sigma_r\}$, point spectrum, approximative spectrum and residual spectrum.\\
In \cite{Mulk},  the authors showed this spectral inclusion for the semi-regular spectrum.\\
In this work, as a continuous of the previous work, we will give a spectral inclusion for  local spectrum.  Also,  we investigate some  local spectral properties for  $C_0$ semigroup  and its generator. Some stabilities results are established.


\section{ Local Spectral Theory  }
Now, we start the present section by the following lemma which we need in the sequel.
\begin{lemma}\label{1}\cite{TB}
Let $(T(t))_{t\geq 0}$ a $C_0$-semigroup on $X$ with infinitesimal generator $A$. For  $\lambda \in \mathbb{C}$ and $t\geq 0$, let $F_{\lambda}(t)x=\int_0^t e^{-\lambda s}B_\lambda(s)x ds$, then:
\begin{enumerate}
\item There exist a  $M\geq 1$ and $\omega > Re(\lambda)$ such that $F_{\lambda}(t) \leq \frac{M}{(\omega-Re(\lambda))^2} e^{(\omega-Re(\lambda)) t} .$
\item  $\forall x\in X$, $F_{\lambda}(t)x\in D(A)$ and  $(\lambda -A)F_{\lambda}(t) + G_{\lambda}(t) B_\lambda(t)=tI$ with $G_{\lambda}(t)=e^{-\lambda t}I.$
\item The operators $F_{\lambda}(t)$, $G_{\lambda}(t)$ and  $B_\lambda(t)$ are pairwise commute and for all  $x\in D(A)$:
\begin{eqnarray*}
  (\lambda -A)F_{\lambda}(t)x&=& F_{\lambda}(t)(\lambda -A)x \\
  (\lambda -A)G_{\lambda}(t)x&=& G_{\lambda}(t)(\lambda -A)x \\
    (\lambda -A)B_\lambda(t)x&=&B_\lambda(t)(\lambda -A)x
\end{eqnarray*}
\end{enumerate}
\end{lemma}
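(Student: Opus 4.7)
The plan is to verify the three items in order, using the spectral/semigroup identities recorded above, the standard exponential bound for a $C_0$ semigroup, and the closedness of $A$.

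For (1), I would fix $M\geq 1$ and $\omega > Re(\lambda)$ with $\|T(s)\|\leq Me^{\omega s}$ (such a pair exists for any $C_0$ semigroup). Direct estimation of the integrand in the definition of $B_\lambda(s)$ then yields $\|B_\lambda(s)\|\leq \frac{M}{\omega-Re(\lambda)}e^{\omega s}$. Plugging this into $F_\lambda(t)x=\int_0^t e^{-\lambda s}B_\lambda(s)x\,ds$ and integrating once more in $s$ produces the announced bound $\frac{M}{(\omega-Re(\lambda))^2}e^{(\omega-Re(\lambda))t}$.

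For (2), the key input is the preliminary relation $(\lambda-A)B_\lambda(s)x=(e^{\lambda s}-T(s))x$, valid for every $x\in X$. I would set $f(s):=e^{-\lambda s}B_\lambda(s)x$; strong continuity of $T$ makes $s\mapsto B_\lambda(s)x$ continuous, hence $f$ is continuous, and the identity above rearranges to $Af(s)=\lambda f(s)-x+e^{-\lambda s}T(s)x$, which is continuous as well. Closedness of $A$ then permits pulling it outside the Bochner integral, so $F_\lambda(t)x=\int_0^t f(s)\,ds\in D(A)$ and $AF_\lambda(t)x=\int_0^t Af(s)\,ds=\lambda F_\lambda(t)x-tx+\int_0^t e^{-\lambda s}T(s)x\,ds$. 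Recognising $\int_0^t e^{-\lambda s}T(s)x\,ds=e^{-\lambda t}B_\lambda(t)x=G_\lambda(t)B_\lambda(t)x$ rearranges this to $(\lambda-A)F_\lambda(t)x+G_\lambda(t)B_\lambda(t)x=tx$.

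For (3), each of $B_\lambda(t),F_\lambda(t),G_\lambda(t)$ is built from the semigroup by scalar operations and Bochner integration. Pairwise commutation reduces to the semigroup law $T(r)T(s)=T(s)T(r)$: that $T(r)$ commutes with $B_\lambda(s)$ follows by pushing $T(r)$ under the integral, which then forces $B_\lambda(s)$ to commute with $B_\lambda(t)$ and with $F_\lambda(t)$, while $G_\lambda(t)=e^{-\lambda t}I$ commutes with everything trivially. The intertwining with $\lambda-A$ on $D(A)$ follows by the same pattern: for $x\in D(A)$ the standard fact $T(s)Ax=AT(s)x$ transports under Bochner integration to $(\lambda-A)B_\lambda(t)x=B_\lambda(t)(\lambda-A)x$, and an identical argument handles $F_\lambda(t)$. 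The main obstacle I expect is the Bochner/closedness manoeuvre in (2): once continuity of $f$ and $Af$ is verified so that $A$ may be interchanged with $\int_0^t$, everything else is routine bookkeeping with the preliminary identities.
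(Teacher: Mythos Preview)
The paper does not supply its own proof of this lemma: it is quoted verbatim from \cite{TB} and left unproved here, so there is nothing to compare your argument against line by line. That said, your proposal is correct and is exactly the expected route---the exponential growth bound $\|T(s)\|\le Me^{\omega s}$ with $\omega>Re(\lambda)$ for (1), the closedness-of-$A$/Bochner-integral interchange applied to $f(s)=e^{-\lambda s}B_\lambda(s)x$ for (2), and the semigroup law $T(r)T(s)=T(s)T(r)$ plus $AT(s)x=T(s)Ax$ on $D(A)$ for (3)---so you may be confident it matches the standard derivation in the cited reference.
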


\begin{theorem}\label{1}
For the generator $ A $ of a strongly continuous semigroup
$(T (t))_{t\geq0}$, we have the following  inclusion :
$$S(T(t))\subseteq e^{tS(A)} .$$
\end{theorem}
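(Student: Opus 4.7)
The plan is to prove the contrapositive: if $\mu \notin e^{tS(A)}$, so that $A$ has SVEP at every $\lambda \in \mathbb{C}$ with $e^{\lambda t}=\mu$, then $T(t)$ has SVEP at $\mu$. Suppose for contradiction that there is an analytic $f:U\to X$, not identically zero, with $(T(t)-zI)f(z)=0$ on an open neighborhood $U$ of $\mu$; I focus on the main case $\mu\neq 0$.

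Shrink $U$ so that $0\notin U$ and pick a holomorphic branch $\lambda_0(z)$ of $\frac{1}{t}\log z$ on $U$; the other solutions of $e^{\lambda t}=z$ are $\lambda_k(z)=\lambda_0(z)+\frac{2\pi i k}{t}$ for $k\in\mathbb{Z}$. Define $h_k(z):=B_{\lambda_k(z)}(t)f(z)$. Since $B_\lambda(t)$ sends $X$ into $D(A)$, each $h_k$ is an analytic $D(A)$-valued function on $U$, and the identity $(e^{\lambda t}-T(t))x=(\lambda-A)B_\lambda(t)x$ recalled in the preliminaries gives
$$(\lambda_k(z)-A)h_k(z)=(z-T(t))f(z)=0.$$
Composing with the local inverse of $\lambda\mapsto e^{\lambda t}$ yields $H_k(\lambda):=h_k(e^{\lambda t})$, analytic into $D(A)$ on a neighborhood of $\lambda_k(\mu)$ and satisfying $(\lambda-A)H_k(\lambda)=0$. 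By hypothesis $A$ has SVEP at each $\lambda_k(\mu)$, so every $H_k$ vanishes identically, and hence so does every $h_k$.

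The concluding step is a Fourier synthesis. Fix $z\in U$. Dividing $h_k(z)=0$ by $e^{\lambda_k(z)t}=z\neq 0$ and inserting $\lambda_k(z)=\lambda_0(z)+2\pi ik/t$ gives
$$\int_0^t\bigl(e^{-\lambda_0(z)s}T(s)f(z)\bigr)\,e^{-2\pi iks/t}\,ds=0,\qquad k\in\mathbb{Z}.$$
These are the Fourier coefficients of the continuous $X$-valued map $s\mapsto e^{-\lambda_0(z)s}T(s)f(z)$ on $[0,t]$; completeness of the trigonometric system on $[0,t]$, extended to Banach-valued continuous functions by composition with $\phi\in X^*$ and Hahn--Banach, forces that map to vanish. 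Evaluating at $s=0$ gives $f(z)=0$ for every $z\in U$, contradicting $f\not\equiv 0$.

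The main obstacle is the need to use all countably many preimages simultaneously: a single branch will not suffice, because $B_{\lambda_0(z)}(t)$ could annihilate $f(z)$ while $f(z)$ is nonzero, so the argument has to harness every $\lambda_k(\mu)$ at once and convert the resulting orthogonality relations through completeness of $\{e^{2\pi iks/t}\}_{k\in\mathbb{Z}}$. The boundary case $\mu=0$ lies outside this framework since the exponential map never attains zero, and would require a separate treatment.
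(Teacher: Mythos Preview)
Your argument for $\mu\neq 0$ is correct, but it follows a substantially different route from the paper. The paper works with the sequence characterization of SVEP failure recalled in the preliminaries rather than with analytic test functions: starting from $\mu=e^{\lambda t}\in S(T(t))$ with a single fixed preimage $\lambda$, it takes a witnessing sequence $(x_i)$ with $x_0=0$, $(e^{\lambda t}-T(t))x_i=x_{i-1}$ and $\sup_i\|x_i\|^{1/i}<\infty$, and sets $y_i:=B_\lambda^i(t)x_i\in D(A)$. The identity $(\lambda-A)B_\lambda(t)=e^{\lambda t}-T(t)$ together with the bound $\|B_\lambda^i(t)\|\le M^i$ immediately yields $(\lambda-A)y_i=y_{i-1}$ and $\sup_i\|y_i\|^{1/i}<\infty$, hence $\lambda\in S(A)$. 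Only one branch is used and no harmonic analysis enters.

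Your proof stays with the analytic-function definition of SVEP, and from that angle you are right that one branch alone does not obviously suffice; it is precisely the passage to the Taylor-coefficient sequence that lets the paper sidestep any worry about non-injectivity of $B_\lambda(t)$. What your route buys is an argument directly from the primary definition, and the Fourier synthesis is an elegant way to reconstruct $f$ from the family $\{B_{\lambda_k(z)}(t)f(z)\}_{k\in\mathbb{Z}}$. What the paper's route buys is brevity---essentially two lines once the sequence criterion is in hand---and in fact something formally sharper: it shows that \emph{every} preimage of $\mu$ lies in $S(A)$, whereas the stated inclusion only requires one. Both arguments treat only $\mu\neq 0$, so your closing caveat matches the scope of the paper's proof.
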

\begin{proof}
Suppose that $e^{\lambda t}-T(t)$ has not  SVEP at $0$, then there exist $x_i\in X$ such that $x_0=0$, $(e^{\lambda t}-T(t))x_i=x_{i-1}$
and $\sup_{i}\|x_i\|^{\frac{1}{i}}<\infty.$
Let $y_i=B_{\lambda}^{i}(t)x_i$, then $(y_i)_{i\geq 0}\subseteq  D(A)$ and $y_0=x_0=0$, and we have :

\begin{eqnarray*}
  (\lambda-A)y_i &=& (\lambda-A)B_{\lambda}(t) B_{\lambda}^{i-1}(t)x_i \\
   &=& (e^{\lambda t}-T(t))B_{\lambda}^{i-1}(t)x_i \\
   &=& B_{\lambda}^{i-1}(t)(e^{\lambda t}-T(t))x_i \\
   &=& B_{\lambda}^{i-1}(t)x_{i-1}\\
   &=& y_{i-1}
\end{eqnarray*}

Therefore,  $(\lambda-A)y_i=y_{i-1}.$
On the other hand $\|y_i\|=\| B_{\lambda}^{i}(t)x_i\|\leq\|B_{\lambda}^{i}(t)\| \|x_i\|\leq M^i\|x_i\|$,
then $\sup_{i}\|y_i\|^{\frac{1}{i}}\leq\sup_{i} M\|x_i\|^{\frac{1}{i}}<\infty$, $M>0$.
So that $\lambda -A$ has not SVEP at $0$, then $S(T(t))\subseteq e^{tS(A)}$.
\end{proof}
In the following, we give a sufficient condition to show that the local spectral subspace $X_{T(t)}(\emptyset)$, $t>0$,  is closed.
\begin{corollary}
Let $(T(t))_{t\geq0}$ a $C_0-$semigroup, with generator $A$. Then:
\begin{center}
$X_{A}(\emptyset)=\{0\}$ implies that $X_{T(t)}(\emptyset)$ is closed for all $t\geq 0$.
\end{center}
\end{corollary}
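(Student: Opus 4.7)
The plan is to chain together three ingredients already on the page: the hypothesis, the preceding Theorem~\ref{1}, and the equivalences for bounded operators quoted in the Preliminaries. The corollary concerns the bounded operator $T(t)$, so the sharper bounded-operator statement ``$X_{T(t)}(\emptyset)$ closed $\iff X_{T(t)}(\emptyset)=\{0\} \iff S(T(t))=\emptyset$'' is available, which means it suffices to prove $S(T(t))=\emptyset$.

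First I would translate the hypothesis: since $X_A(\emptyset)=\{0\}$, the result quoted from \cite{PA} immediately gives $S(A)=\emptyset$. Next I would invoke Theorem~\ref{1}, which establishes the inclusion $S(T(t))\subseteq e^{tS(A)}$; plugging in $S(A)=\emptyset$ forces $S(T(t))\subseteq e^{t\emptyset}=\emptyset$, hence $S(T(t))=\emptyset$. Finally, since $T(t)$ is a bounded linear operator on $X$, the Laursen--Neumann type equivalence quoted from \cite{lau} turns $S(T(t))=\emptyset$ into $X_{T(t)}(\emptyset)=\{0\}$, which is trivially closed.

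There is essentially no obstacle: the corollary is a direct logical consequence of Theorem~\ref{1} together with the already-recorded facts about SVEP and the glossy local spectral subspace $X_{(\cdot)}(\emptyset)$. The only subtle point worth mentioning in the write-up is that for the unbounded generator $A$ only the one-sided implication $X_A(\emptyset)=\{0\}\Rightarrow S(A)=\emptyset$ is used (which is valid in general), whereas for the bounded operator $T(t)$ the full chain of equivalences is used to pass from $S(T(t))=\emptyset$ back to the closedness of $X_{T(t)}(\emptyset)$. The case $t=0$ is trivial since $T(0)=I$, so $X_I(\emptyset)=\{0\}$ automatically.
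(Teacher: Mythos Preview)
Your proposal is correct and follows essentially the same route as the paper's own proof: from $X_A(\emptyset)=\{0\}$ deduce $S(A)=\emptyset$, apply Theorem~\ref{1} to obtain $S(T(t))=\emptyset$, and then invoke the bounded-operator equivalence from \cite{lau} to conclude that $X_{T(t)}(\emptyset)=\{0\}$ is closed. Your additional remarks on the one-sided implication for unbounded $A$ and the trivial case $t=0$ are accurate clarifications but not present in the paper's terse argument.
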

\begin{proof}
Since $X_{A}(\emptyset)=\{0\}$ implies that $S(A)=\emptyset$, by theorem \ref{1} we have $S(T(t))=\emptyset$ which equivalent to the fact that $X_{T(t)}(\emptyset)=\{0\}$ equivalently to  $X_{T(t)}(\emptyset)$ is closed.
\end{proof}
\begin{corollary}
Let $(T(t))_{t\geq0}$ a $C_0-$semigroup, with generator $A$. If $A$ has the SVEP then $T(t)$ has the SVEP for all $t\geq 0$.
\end{corollary}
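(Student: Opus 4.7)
The plan is to derive this immediately from Theorem \ref{1}, which provides the spectral inclusion $S(T(t)) \subseteq e^{tS(A)}$ for all $t \geq 0$. The whole argument is essentially a one-line unpacking of definitions, so there is no real obstacle; the only thing to verify is that the exponential map behaves correctly on the empty set.

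First I would recall that by definition, $A$ has the SVEP at every $\lambda \in \mathbb{C}$ precisely when the set $S(A) = \{\lambda \in \mathbb{C} : A \text{ does not have SVEP at } \lambda\}$ is empty. So the hypothesis that $A$ has SVEP translates directly into $S(A) = \emptyset$.

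Next, since $S(A) = \emptyset$, the image $e^{tS(A)}$ is also empty for every $t \geq 0$ (it is the image of the empty set under the map $\lambda \mapsto e^{t\lambda}$). Applying Theorem \ref{1} then yields
\[
S(T(t)) \subseteq e^{tS(A)} = \emptyset,
\]
so $S(T(t)) = \emptyset$, which is exactly the statement that $T(t)$ has the SVEP at every point of $\mathbb{C}$, i.e.\ $T(t)$ has the SVEP. Since this holds for every $t \geq 0$, the conclusion follows. The only mildly delicate point is to make clear that "$S(A) = \emptyset$" is equivalent to "$A$ has SVEP everywhere," but this is built into the definition given in the preliminaries, so no additional work is required.
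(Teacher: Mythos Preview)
Your argument is correct and matches the paper's intended approach: the paper states this corollary without proof, immediately after Theorem~\ref{1}, so it is meant to follow exactly as you describe by noting that $S(A)=\emptyset$ forces $S(T(t))\subseteq e^{tS(A)}=\emptyset$.
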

In the following Theorem, we give a spectral inclusion for the local spectrum.
\begin{theorem}\label{T1}
Let $(T(t))_{t\geq 0}$ a $C_0$-semigroup on $X$ with infinitesimal generator $A$.  The following spectral inclusion hold :
$$e^{t\sigma_{A}(x)}\subseteq \sigma_{T(t)}(x)$$
\end{theorem}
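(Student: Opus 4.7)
The plan is to prove the contrapositive pointwise: if $e^{\lambda t}\in\rho_{T(t)}(x)$, then $\lambda\in\rho_A(x)$. This mirrors the strategy used in Theorem~\ref{1}, but now the series of ``Taylor coefficients'' must reproduce $x$ on the right-hand side rather than $0$, so some bookkeeping with $B_\lambda(t)$ is needed.

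First I would invoke the sequential characterization of the local resolvent recalled in the Preliminaries: the assumption $e^{\lambda t}\in\rho_{T(t)}(x)$ provides a sequence $(y_i)_{i\geq 0}\subseteq X$ satisfying
\[
(T(t)-e^{\lambda t})y_0=x,\qquad (T(t)-e^{\lambda t})y_{i+1}=y_i,\qquad \sup_i\|y_i\|^{1/i}<\infty.
\]
Then I would set $z_i:=B_\lambda^{i+1}(t)y_i$. Because $B_\lambda(t)$ sends $X$ into $D(A)$ (and iterates land in $D(A^{i+1})\subseteq D(A)$), the sequence $(z_i)$ lies in $D(A)$, as required by the characterization.

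Next I would carry out the algebraic verification using the identities from the Preliminaries together with Lemma~\ref{1}. From $(\lambda-A)B_\lambda(t)=e^{\lambda t}-T(t)$ on $X$ and the commutation of $B_\lambda(t)$ with $T(t)$, one computes
\begin{align*}
(\lambda-A)z_0 &= (\lambda-A)B_\lambda(t)y_0 = (e^{\lambda t}-T(t))y_0 = -x,\\
(\lambda-A)z_{i+1} &= (\lambda-A)B_\lambda(t)\,B_\lambda^{i+1}(t)y_{i+1} = B_\lambda^{i+1}(t)(e^{\lambda t}-T(t))y_{i+1} = -z_i,
\end{align*}
which rearranges to $(A-\lambda)z_0=x$ and $(A-\lambda)z_{i+1}=z_i$, exactly the recurrence needed for $\lambda\in\rho_A(x)$.

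Finally, I would check the growth condition. Since $B_\lambda(t)$ is a bounded operator (say $\|B_\lambda(t)\|\leq M_\lambda$), one has $\|z_i\|\leq M_\lambda^{\,i+1}\|y_i\|$, hence
\[
\sup_i\|z_i\|^{1/i}\;\leq\;\sup_i M_\lambda^{\,(i+1)/i}\|y_i\|^{1/i}\;\leq\;M_\lambda^{\,2}\,\sup_i\|y_i\|^{1/i}\;<\;\infty.
\]
This closes the argument: the characterization yields $\lambda\in\rho_A(x)$, and taking contrapositive gives $e^{t\sigma_A(x)}\subseteq\sigma_{T(t)}(x)$. No step looks like a serious obstacle; the only delicate point is choosing the correct power of $B_\lambda(t)$ (namely $i+1$ rather than $i$) so that the first relation recovers $x$ and the recursion closes with the right sign.
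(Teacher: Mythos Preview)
Your proof is correct and follows exactly the same route as the paper: starting from the sequential characterization of $e^{\lambda t}\in\rho_{T(t)}(x)$, you define the new sequence via $z_i=B_\lambda^{i+1}(t)y_i$, use the identity $(\lambda-A)B_\lambda(t)=e^{\lambda t}-T(t)$ and the commutation of $B_\lambda(t)$ with $T(t)$ to obtain the required recursion for $\lambda\in\rho_A(x)$, and bound the growth by $\|B_\lambda(t)\|^{i+1}$. The only cosmetic difference is that you track the sign carefully (writing $(T(t)-e^{\lambda t})$ and concluding $(A-\lambda)z_0=x$, in agreement with the characterization stated in the Preliminaries), whereas the paper works with $(e^{\lambda t}-T(t))$ and $(\lambda-A)$ throughout; this is immaterial for the conclusion.
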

\begin{proof}
Let  $e^{\lambda t}\notin \sigma_{T(t)}(x)$, then there exists $(x_i)_{i\geq 0}\subseteq X, $
such that $$(e^{\lambda t}-T(t))x_0=x,\>\> (e^{\lambda t}-T(t))x_i=x_{i-1} \mbox{  and  } \sup\|x_i\|^{\frac{1}{i}}<\infty.$$
Let $y_i=B_{\lambda}^{i+1}(t)x_i$, then $(y_i)_{i\geq 0}\subseteq  D(A)$ and   $y_0=B_{\lambda}x_0$.
 We have : \begin{eqnarray*}
  (\lambda-A)y_i &=& (\lambda-A)B_{\lambda}(t) B_{\lambda}^{i}(t)x_i \\
   &=& (e^{\lambda t}-T(t))B_{\lambda}^{i}(t)x_i \\
   &=& B_{\lambda}^{i}(t)(e^{\lambda t}-T(t))x_i \\
   &=& B_{\lambda}^{i}(t)x_{i-1}\\
   &=& y_{i-1}
\end{eqnarray*}
 and   $$\sup\|y_i\|^{\frac{1}{i}}<\infty$$
So that $\lambda \notin \sigma_A(x)$\\
\end{proof}
\begin{corollary}\label{c1}
Let $(T(t))_{t\geq0}$ a $C_0-$semigroup, with generator $A$. Then:
\begin{enumerate}
  \item $K(e^{\lambda t}- T(t))\subseteq K(\lambda-A)$;
  \item $C(e^{\lambda t}- T(t))\subseteq C(\lambda-A)$.
\end{enumerate}
\end{corollary}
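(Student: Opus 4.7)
Both inclusions should follow with essentially no new work, either directly from Theorem \ref{T1} or by reusing the chain construction appearing in its proof.

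For part (1), my plan is to translate both sides into local-spectrum conditions via the characterization $K(S)=X_S(\mathbb{C}\setminus\{0\})$ recalled in the preliminaries (valid for any closed operator $S$, after the obvious scalar shift of the local spectrum). This gives
$$K(e^{\lambda t}-T(t))=\{x\in X:e^{\lambda t}\notin\sigma_{T(t)}(x)\},\qquad K(\lambda-A)=\{x\in X:\lambda\notin\sigma_A(x)\}.$$
Theorem \ref{T1} then reads as exactly the contrapositive of the desired inclusion: if $\lambda\in\sigma_A(x)$ then $e^{\lambda t}\in e^{t\sigma_A(x)}\subseteq\sigma_{T(t)}(x)$, so $e^{\lambda t}\notin\sigma_{T(t)}(x)$ forces $\lambda\notin\sigma_A(x)$, which is the required inclusion.

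For part (2), I would mimic the proof of Theorem \ref{T1} but drop the norm bound, which plays no role in the definition of the algebraic core. Given $x\in C(e^{\lambda t}-T(t))$, choose $(x_n)_{n\ge 0}\subseteq X$ with $(e^{\lambda t}-T(t))x_0=x$ and $(e^{\lambda t}-T(t))x_n=x_{n-1}$ for $n\ge 1$, and set $y_n=B_\lambda^{n+1}(t)x_n$. Each $y_n$ lies in $D(A)$ because $B_\lambda(t)X\subseteq D(A)$, and the identity $(\lambda-A)B_\lambda(t)=e^{\lambda t}I-T(t)$ together with the commutation properties of $B_\lambda(t)$ give
$$(\lambda-A)y_0=x,\qquad (\lambda-A)y_n=y_{n-1}\quad(n\ge 1),$$
which is the chain in $D(A)$ witnessing $x\in C(\lambda-A)$.

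I do not foresee any real obstacle: (1) is a contrapositive reading of Theorem \ref{T1}, and (2) is a stripped-down rerun of its proof. The only small point that deserves care is the precise form of the chain condition used for $C(\lambda-A)$, since a generic element of $C(e^{\lambda t}-T(t))\subseteq X$ need not a priori lie in $D(A)$; one should therefore interpret membership in $C(A)$ via the existence of a sequence $(y_n)\subseteq D(A)$ with $Ay_0=x$ and $Ay_n=y_{n-1}$ for $n\ge 1$, which is exactly what the above construction provides.
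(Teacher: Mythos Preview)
Your argument for part (1) is exactly the paper's: rewrite $K$ via the local-spectral characterization and invoke Theorem \ref{T1}. The paper does not write out a proof for part (2) at all; your chain argument with $y_n=B_\lambda^{n+1}(t)x_n$ (the same construction as in Theorem \ref{T1}, minus the norm estimate) is the natural completion and is correct, including the caveat you raise about how membership in $C(\lambda-A)$ should be read when $x$ is not a priori in $D(A)$.
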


\begin{proof}
Since $K(e^{\lambda t}- T(t))=\{x\in X; 0\in\rho_{(e^{\lambda t}- T(t))}(x)\}$, then if $x\in K(e^{\lambda t}- T(t))$ implies that $e^{\lambda t}\in \rho_{T(t)}(x)$, by theorem \ref{T1} we have $\lambda\in \rho_{A}(x)$, therefore $x\in K(\lambda-A)$.
\end{proof}
Denote by $\sigma_{ac}(T)=\{\lambda\in\mathbb{C}: K(\lambda-T)=\{0\}\}$ the analytic core spectrum of $T$ and by
$\sigma_{alc}(T)=\{\lambda\in\mathbb{C}:  C(\lambda-T)=\{0\}\}$ the  algebraic core spectrum of $T$ \cite{bak,bakk}.
As a straightforward consequence of the corollary \ref{c1}, we have the following corollary.
\begin{corollary}
Let $(T(t))_{t\geq0}$ a $C_0-$semigroup, with generator $A$. Then:
\begin{enumerate}
  \item $e^{t\sigma_{ac}(A)}\subseteq \sigma_{ac}(T(t))$;
  \item $e^{t\sigma_{alc}(A)}\subseteq \sigma_{alc}(T(t))$.
\end{enumerate}
\end{corollary}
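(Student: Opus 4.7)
The plan is to deduce both inclusions directly from Corollary \ref{c1} by a pointwise contrapositive argument. For part (1), I would fix $\lambda \in \sigma_{ac}(A)$, which by definition of the analytic core spectrum means $K(\lambda - A) = \{0\}$. Part (1) of Corollary \ref{c1} gives $K(e^{\lambda t} - T(t)) \subseteq K(\lambda - A)$, and since the analytic core always contains $0$ (take the constant zero sequence $x_n = 0$, $\delta = 1$), this forces $K(e^{\lambda t} - T(t)) = \{0\}$. By the definition of $\sigma_{ac}(T(t))$, this means exactly that $e^{\lambda t} \in \sigma_{ac}(T(t))$, establishing the first inclusion.

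Part (2) proceeds identically, replacing $K$ with $C$ throughout and invoking part (2) of Corollary \ref{c1} in place of part (1); the algebraic core also trivially contains $0$, so the same reduction goes through. There is no real obstacle here: the preceding corollary already carries out the substantive work of transferring the core structure between $\lambda - A$ and $e^{\lambda t} - T(t)$, and the present statement is merely the spectral reformulation. The only small point worth making explicit in the write-up is that $\{0\}$ is always a subset of both $K(\cdot)$ and $C(\cdot)$, so that the hypothesis of triviality on $\lambda - A$ propagates through the inclusion to force triviality on $e^{\lambda t} - T(t)$, rather than leaving open the possibility of a strict intermediate subspace.
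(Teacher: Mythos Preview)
Your proposal is correct and matches the paper's approach exactly: the paper states that this corollary is ``a straightforward consequence of Corollary~\ref{c1}'' without writing out details, and your argument is precisely that straightforward consequence. The only minor quibble is terminological---the argument is direct rather than contrapositive---but the mathematical content is right.
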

\begin{lemma}\label{l1}
Let $(T(t))_{t\geq0}$ a $C_0-$semigroup, $A$ its generator and  for all $x\in X$, $B_{\lambda}(t)x=\int_{0}^{t}e^{\lambda(t-s)}T(s)xds$,
$F_{\lambda}(t)x=\int_{0}^{t}e^{-\lambda s}B_{\lambda}(t)xds$.
Let $u\in D(A),v\in X$ such that $(\lambda-A)u=B_{\lambda}(t)v$. Then there exists $w\in D(A)$ such that:
  \begin{enumerate}
    \item $(\lambda -A)w=v$;
    \item $B_{\lambda}(t)w=u$;
    \item $\|w\|\leq (\|F_{\lambda}(t)\|+\|G_{\lambda}(t)\|)\max(\|u\|,\|v\|).$
  \end{enumerate}

\end{lemma}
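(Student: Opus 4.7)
The plan is to produce $w$ explicitly as a linear combination of $F_{\lambda}(t)v$ and $G_{\lambda}(t)u$, the combination being dictated by the resolvent-type identity $(\lambda-A)F_{\lambda}(t)+G_{\lambda}(t)B_{\lambda}(t)=tI$ supplied by the preceding Lemma~1.

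First I would set $w:=\tfrac{1}{t}\bigl(F_{\lambda}(t)v+G_{\lambda}(t)u\bigr)$. Since Lemma~1~(2) gives $F_{\lambda}(t)X\subseteq D(A)$, and $G_{\lambda}(t)=e^{-\lambda t}I$ obviously preserves $D(A)$, the hypothesis $u\in D(A)$ forces $w\in D(A)$. For~(1), I would evaluate the identity $(\lambda-A)F_{\lambda}(t)+G_{\lambda}(t)B_{\lambda}(t)=tI$ at $v$ to get
\[
tv=(\lambda-A)F_{\lambda}(t)v+G_{\lambda}(t)B_{\lambda}(t)v,
\]
substitute the hypothesis $B_{\lambda}(t)v=(\lambda-A)u$, and use that $G_{\lambda}(t)$ commutes with $(\lambda-A)$ on $D(A)$ (Lemma~1~(3)) to rewrite the right-hand side as $(\lambda-A)\bigl(F_{\lambda}(t)v+G_{\lambda}(t)u\bigr)$; dividing by $t$ yields $(\lambda-A)w=v$.

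For~(2), I would exploit the pairwise commutativity of $B_{\lambda}(t)$, $F_{\lambda}(t)$, $G_{\lambda}(t)$ to write $tB_{\lambda}(t)w=F_{\lambda}(t)B_{\lambda}(t)v+G_{\lambda}(t)B_{\lambda}(t)u$, substitute $B_{\lambda}(t)v=(\lambda-A)u$, move $(\lambda-A)$ past $F_{\lambda}(t)$ (legitimate because $u\in D(A)$), and re-apply the key identity to $u$ to obtain $tB_{\lambda}(t)w=tu$, that is $B_{\lambda}(t)w=u$. The bound~(3) is then a direct triangle inequality on the formula defining $w$.

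The main obstacle is purely bookkeeping: at each step one must check that $(\lambda-A)$ is applied only to elements of $D(A)$ and that the commutation statements of Lemma~1~(3) are invoked on the correct domains, since $A$ is unbounded. Once this tracking is carried out, what remains is a short algebraic rearrangement driven by the identity $(\lambda-A)F_{\lambda}(t)+G_{\lambda}(t)B_{\lambda}(t)=tI$.
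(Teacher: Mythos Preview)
Your construction is exactly the paper's: it too takes $w=F_{\lambda}(t)v+G_{\lambda}(t)u$ and verifies (1)--(3) by the same two algebraic rearrangements you describe, invoking Lemma~1~(2) and the commutation relations of Lemma~1~(3). The only discrepancy is your factor $\tfrac{1}{t}$. You insert it because Lemma~1~(2) is stated with $tI$ on the right, and with that normalization your division is indeed required. The paper's proof, however, silently uses the identity as though the right-hand side were $I$ (writing $(\lambda-A)F_{\lambda}(t)v=(I-B_{\lambda}(t)G_{\lambda}(t))v$), so no division appears. This is an internal inconsistency in the paper---one of the two occurrences is a typo---but it is harmless: either normalization yields a valid $w$, and the bound in (3) changes only by the fixed constant $t$, which is irrelevant for the sole application of this lemma (the inductive construction in Lemma~\ref{ll1}, where only a bound of the form $\|w\|\le C\max(\|u\|,\|v\|)$ for \emph{some} constant $C=C(\lambda,t)$ is used).
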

\begin{proof}
Let $w=F_{\lambda}(t)v+G_{\lambda}(t)u \in D(A)$,
then $\|w\|\leq (\|F_{\lambda}(t)\|+\|G_{\lambda}\|)max \{\|u\|,\|v\|\}$. And we have:

\begin{eqnarray*}
  (\lambda -A)w &=& (\lambda-A)F_{\lambda}(t)v+(\lambda-A)G_{\lambda}(t)u \\
   &=& (I-B_{\lambda}(t)G_{\lambda}(t))v+(\lambda-A)G_{\lambda}(t)u  \mbox{  by (Lemma \ref{1} (2) )} \\
   &=&  v- G_{\lambda}(t)B_{\lambda}(t)v+G_{\lambda}(t)(\lambda-A)u \\
   &=& v
\end{eqnarray*}
\begin{eqnarray*}
  B_{\lambda}(t)w &=& B_{\lambda}(t)G_{\lambda}(t)u+B_{\lambda}(t)F_{\lambda}(t)v \\
   &=& (I-(\lambda-A)F_{\lambda}(t))u+B_{\lambda}(t)F_{\lambda}(t)v  \mbox{  by (Lemma \ref{1} (2) )}\\
   &=&  u-F_{\lambda}(t)(\lambda-A)u + F_{\lambda}(t)B_{\lambda}(t)v \\
   &=& u
\end{eqnarray*}
\end{proof}
\begin{lemma}\label{ll1}
Let $(T(t))_{t\geq0}$ a $C_0-$semigroup, with generator $A$. Then:\begin{enumerate}
                                                                    \item $K(B_{\lambda}(t))\cap K(\lambda -A)\subseteq K(e^{\lambda t}-T(t))$
                                                                    \item $C(B_{\lambda}(t))\cap C(\lambda -A)\subseteq C(e^{\lambda t}-T(t))$
                                                                    \item $S(B_{\lambda}(t))\cap S(\lambda -A)\subseteq S(e^{\lambda t}-T(t))$
                                                                  \end{enumerate}

\end{lemma}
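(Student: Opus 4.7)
My plan is to derive all three inclusions from a single construction: a two-dimensional lattice $(z_{j,k})_{j,k\geq 0}$ obtained by iterating Lemma~\ref{l1}, from which the desired chain is read off along the diagonal.

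Concentrating on part (1), let $x\in K(B_\lambda(t))\cap K(\lambda-A)$ and fix the defining chains $(u_k)_{k\geq 0}\subset X$ and $(v_j)_{j\geq 0}\subset D(A)$ with $u_0=v_0=x$, $B_\lambda(t)u_k=u_{k-1}$, $(\lambda-A)v_j=v_{j-1}$, $\|u_k\|\leq\nu^k\|x\|$, $\|v_j\|\leq\mu^j\|x\|$. I set $z_{j,0}:=v_j$, $z_{0,k}:=u_k$, and for $j,k\geq 1$ define $z_{j,k}$ by applying Lemma~\ref{l1} to $u:=z_{j,k-1}\in D(A)$ and $v:=z_{j-1,k}\in X$. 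By induction on $j+k$, the previously established identities give
\[
(\lambda-A)z_{j,k-1}=z_{j-1,k-1}=B_\lambda(t)z_{j-1,k},
\]
(or, if a neighbour lies on the boundary, this is direct from the chain relations for $(u_k),(v_j)$), so the compatibility hypothesis of Lemma~\ref{l1} holds. The lemma then produces $z_{j,k}\in D(A)$ satisfying $(\lambda-A)z_{j,k}=z_{j-1,k}$, $B_\lambda(t)z_{j,k}=z_{j,k-1}$ and
\[
\|z_{j,k}\|\leq C\max\bigl(\|z_{j,k-1}\|,\|z_{j-1,k}\|\bigr),\qquad C:=\|F_\lambda(t)\|+\|G_\lambda(t)\|.
\]

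Setting $z_n:=z_{n,n}$ one obtains $z_0=x$ and, using $e^{\lambda t}-T(t)=(\lambda-A)B_\lambda(t)$,
\[
(e^{\lambda t}-T(t))z_n=(\lambda-A)B_\lambda(t)z_{n,n}=(\lambda-A)z_{n,n-1}=z_{n-1,n-1}=z_{n-1}.
\]
A direct induction on the norm recurrence with $R:=\max(C,\mu,\nu)$ yields $\|z_{j,k}\|\leq R^{j+k}\|x\|$, hence $\|z_n\|\leq(R^2)^n\|x\|$, which proves $x\in K(e^{\lambda t}-T(t))$. Part (2) is the very same construction with the norm bookkeeping dropped, since the algebraic core requires only the chain relations. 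For part (3), I apply the construction to chains $(u_k),(v_j)$ with $u_0=v_0=0$, each not identically zero; after shifting each chain so that $u_1\neq 0$ or $v_1\neq 0$, the relations $(\lambda-A)z_{1,1}=u_1$ and $B_\lambda(t)z_{1,1}=v_1$ force $z_{1,1}\neq 0$, while the same norm recurrence preserves $\sup_n\|z_n\|^{1/n}<\infty$.

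The main obstacle is the bookkeeping for the compatibility hypothesis of Lemma~\ref{l1} at every interior node of the lattice. This works because the outputs of Lemma~\ref{l1} at the two previously constructed nodes $z_{j,k-1}$ and $z_{j-1,k}$ are precisely designed so that they share the common value $z_{j-1,k-1}$ under $(\lambda-A)$ and $B_\lambda(t)$ respectively; the boundary cases reduce immediately to the chain relations for $(u_k)$ and $(v_j)$.
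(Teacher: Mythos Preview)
Your proposal is correct and follows essentially the same route as the paper: both build a doubly-indexed family $(x_{i,j})$ (your $(z_{j,k})$) by iterating Lemma~\ref{l1}, then read off the required chain along the diagonal, with the factorisation $e^{\lambda t}-T(t)=(\lambda-A)B_\lambda(t)$ doing the work. Your write-up is in fact cleaner than the paper's: you make the inductive verification of the compatibility hypothesis $(\lambda-A)z_{j,k-1}=z_{j-1,k-1}=B_\lambda(t)z_{j-1,k}$ explicit, you give a tidy norm bound $\|z_{j,k}\|\le R^{j+k}\|x\|$ (the paper's version is stated loosely), and for part~(3) you actually address the non-triviality of the diagonal sequence, which the paper omits.
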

\begin{proof}
 $1)$: We have $K(B_{\lambda}(t))\cap K(\lambda -A)\subseteq K(e^{\lambda t}-T(t))$.\\
Indeed : Let $x\in K(B_{\lambda}(t))\cap K(\lambda -A).$  Then there exists $(x_{i,0})_{i\geq0}\in D(A), (x_{0,j})_{j\geq0}\in X$ and $\delta>0$
such that $(\lambda -A)x_{i,0}=x_{i-1,0}, \|x_{i,0}\|\leq\delta^{i}\|x\|$ and $B_{\lambda}(t)x_{0,j}=x_{0,j-1},\|x_{0,j}\|\leq\delta^{j}\|x\|$.\\
We have $(\lambda-A)x_{1,0}=x_{0,0}=B_{\lambda}(t)x_{0,1}$, according to lemma \ref{l1} there exists a $x_{1,1}\in X$ such that :
\begin{center}
$(\lambda-A)x_{1,1}= x_{0,1}$ and  $B_{\lambda}(t)x_{1,1}=x_{1,0}$
\end{center}
and we have $(\lambda-A)x_{2,0}=x_{1,0}= B_{\lambda}(t)x_{1,1}$, lemma \ref{l1} implies there exits a $x_{2,1}$ such that  $(\lambda-A)x_{2,1}=x_{1,1}$, hence  $B_{\lambda}(t)x_{0,2}=x_{0,1}=(\lambda-A)x_{1,1}$ consequently  there exists $x_{1,2}$ such that $B_{\lambda}(t)x_{1,2}=x_{1,1}$, therefore
$(\lambda-A)x_{2,1}= B_{\lambda}(t)x_{1,2}=x_{1,1}$,  lemma \ref{l1} implies there exists $x_{2,2}$ such that
\begin{center}
$B_{\lambda}(t)x_{2,2}=x_{2,1}$ and $(\lambda-A)x_{2,2}=x_{1,2}$
\end{center}
Let $x_{2,2}=F_{\lambda}(t)x_{1,2}+ G_{\lambda}(t)x_{2,1}$, by induction we can construct a sequence $(x_{i,j})_{i,j\geq 0}$ defined by
$x_{i,j}=F_{\lambda}(t)x_{i-1,j}+ G_{\lambda}(t)x_{i,j-1}$, and we have  :

\begin{center}
$(\lambda-A)x_{i,j}=x_{i-1, j}$ and  $B_{\lambda}(t)x_{i,j}=x_{i,j-1}$
\end{center}
 and  $||x_{i,j}||\leq \delta ~~\max(||F_{\lambda}(t)||+ ||G_{\lambda}(t)||)^{i+j} ||x||$, for all $i,j\geq 1$

 Let $y_{i}=x_{i,i}$,  then $y_0=x_{0,0}=x$ and
 \begin{eqnarray*}
   (e^{\lambda t}-T(t))y_i &=& (\lambda-A)B_{\lambda}(t)y_i \\
   &=& (\lambda-A)B_{\lambda}(t)x_{i,i}\\
    &=& x_{i-1, j-1}=y_{i-1}
 \end{eqnarray*}
  and $||y_{i}||=||x_{i,i}||\leq \beta ^{i} ||x||$ where $\beta > 0$, hence $x\in  K(e^{\lambda t}-T(t))$.\\
 $2)$: Similar to $1)$\\
 $3)$: Let $x\in S(B_{\lambda}(t))\cap S(\lambda -A).$  Then there exists $(x_{i,0})_{i\geq 0}\in D(A), (x_{0,j})_{j\geq0}\in X$,
 $x_{0,0}=0$, such that $(\lambda -A)x_{i,0}=x_{i-1,0},~~ \sup_{i}\|x_{i,0}\|^{\frac{1}{i}}<\infty$ and $B_{\lambda}(t)x_{0,j}=x_{0,j-1},~~\sup_j \|x_{0,j}\|^{\frac{1}{j} }<\infty$, by the same arguments in $1)$,  we can show that
  $x\in S(e^{\lambda t}-T(t))$.

\end{proof}

\begin{theorem}
Let $(T(t))_{t\geq0}$ a $C_0-$semigroup, with generator $A$. Then

\begin{enumerate}
  \item $K(e^{\lambda t}-T(t))\cap K(B_{\lambda}(t))= K(\lambda-A)\cap K(B_{\lambda}(t)),$
  \item $C(e^{\lambda t}-T(t))\cap C(B_{\lambda}(t))= C(\lambda-A)\cap C(B_{\lambda}(t)).$

\end{enumerate}
\end{theorem}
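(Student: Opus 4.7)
The plan is to deduce both equalities directly by combining Corollary \ref{c1} with Lemma \ref{ll1}. Each equality splits into two inclusions, and each inclusion follows from exactly one of the previously established results after intersecting with $K(B_\lambda(t))$ (respectively $C(B_\lambda(t))$). No new machinery is needed; the theorem is essentially a packaging of the two earlier results.

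For the inclusion $K(e^{\lambda t}-T(t))\cap K(B_{\lambda}(t)) \subseteq K(\lambda-A)\cap K(B_{\lambda}(t))$ in part (1), I would take $x$ in the left-hand side. Corollary \ref{c1}(1) gives $K(e^{\lambda t}-T(t)) \subseteq K(\lambda - A)$, so $x \in K(\lambda - A)$; meanwhile $x \in K(B_\lambda(t))$ is already part of the hypothesis. Hence $x$ lies in the right-hand side.

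For the reverse inclusion, I would invoke Lemma \ref{ll1}(1), which says $K(B_\lambda(t)) \cap K(\lambda - A) \subseteq K(e^{\lambda t} - T(t))$. If $x \in K(\lambda-A)\cap K(B_{\lambda}(t))$, this lemma yields $x \in K(e^{\lambda t}-T(t))$, and trivially $x \in K(B_\lambda(t))$, so $x$ lies in the left-hand side. This closes the equality in part (1).

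Part (2) is handled by exactly the same two-line argument, substituting the $C$-versions: Corollary \ref{c1}(2) for the $\subseteq$ inclusion and Lemma \ref{ll1}(2) for the $\supseteq$ inclusion. Since both ingredients are already available and their combination is purely set-theoretic, there is no substantive obstacle; the only thing to verify is that the two earlier results are cited with the correct indices and in the right direction.
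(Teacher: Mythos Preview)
Your proposal is correct and follows exactly the same approach as the paper: the paper's proof consists of the single sentence ``Corollary \ref{c1} and lemma \ref{ll1} gives the result,'' and your write-up is simply a careful unpacking of that sentence into the two set-theoretic inclusions for each part.
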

\begin{proof}
Corollary \ref{c1} and lemma \ref{ll1} gives the result.
\end{proof}

\section{Stability Results.}
Let $(T(t))_{t\geq 0}$ a $C_0$-semigroup on $X$ with infinitesimal generator $A$.
$\{T(t)\}_{t\geq 0}$ is said to be strongly stable if $ \displaystyle{\lim_{t\rightarrow \infty}}||T(t)x||=0$ for all $x\in X$.
We say that $(T (t))_{t\geq0}$ is uniformly stable if $ \displaystyle{\lim_{t\rightarrow \infty}}||T(t)||=0$.\\
In \cite{Mulk}, A. Elkoutri and M. A. Taoudi showed that $(T (t))t\geq0$ is strongly stable if  $\sigma_{K}(A)\cap i\mathbb{R}=\emptyset$.

 In the following, we give a stability result for strongly continuous semigroups using the local  spectrum:
\begin{theorem}
Let $A$ be the generator of a bounded strongly continuous semigroup $(T(t))_{t\geq0}$.
 If for all $x\in X$, $\sigma_{A}(x)\cap i\mathbb{R}=\emptyset$,
 then $(T(t))_{t\geq 0}$ is strongly stable.
\end{theorem}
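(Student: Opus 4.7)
The plan is to promote the local spectral hypothesis into the semi-regular statement $\sigma_K(A)\cap i\mathbb{R}=\emptyset$, after which the Elkoutri--Taoudi stability theorem from \cite{Mulk} recalled just before the statement immediately delivers strong stability.

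Fix $\eta\in\mathbb{R}$. The assumption that $\sigma_A(x)\cap i\mathbb{R}=\emptyset$ for every $x\in X$ means $i\eta\in\rho_A(x)$ for every $x\in X$. Unwinding the definition of the local resolvent, for each $x$ there is a neighborhood $U$ of $i\eta$ and an analytic function $f:U\to D(A)$ such that $(A-\mu)f(\mu)=x$ on $U$. Evaluating at $\mu=i\eta$ produces $f(i\eta)\in D(A)$ with $(A-i\eta)f(i\eta)=x$, so $x\in R(i\eta-A)$. Therefore $R(i\eta-A)=X$, i.e.\ $i\eta\notin\sigma_{su}(A)$.

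I then upgrade surjectivity to semi-regularity. The range $R(i\eta-A)=X$ is trivially closed, and for any $x\in N(i\eta-A)$ I iteratively lift $x$ through the powers of $i\eta-A$: pick $y_1\in D(A)$ with $(i\eta-A)y_1=x$, and having $y_{k-1}\in D(A^{k-1})$ pick $y_k\in D(A)$ with $(i\eta-A)y_k=y_{k-1}$. A short induction using $Ay_k=i\eta y_k-y_{k-1}$ together with the inductive assumption $y_{k-1}\in D(A^{k-1})$ shows $y_k\in D(A^k)$, and clearly $(i\eta-A)^k y_k=x$, so $x\in R((i\eta-A)^k)$ for every $k$. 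Hence $N(i\eta-A)\subseteq R^{\infty}(i\eta-A)$, $i\eta-A$ is semi-regular, and $\sigma_K(A)\cap i\mathbb{R}=\emptyset$ since $\eta$ was arbitrary. The cited Elkoutri--Taoudi theorem then yields strong stability of $(T(t))_{t\geq 0}$. The only non-bookkeeping point is this inductive verification, which relies on global surjectivity of $i\eta-A$ from $D(A)$ onto $X$ rather than mere denseness of the range; the translation of the local spectrum hypothesis to surjectivity is otherwise immediate from evaluating the local resolvent at the boundary point.
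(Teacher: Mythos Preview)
Your proof is correct and follows the same route as the paper: pass from the local spectral hypothesis to $\sigma_{su}(A)\cap i\mathbb{R}=\emptyset$, then to $\sigma_K(A)\cap i\mathbb{R}=\emptyset$, and invoke the Elkoutri--Taoudi result from \cite{Mulk}. The only difference is cosmetic: the paper quotes the standard identity $\bigcup_{x\in X}\sigma_A(x)=\sigma_{su}(A)$ and the inclusion $\sigma_K(A)\subseteq\sigma_{su}(A)$ as black boxes, whereas you unwind both by hand (evaluating the local resolvent at $i\eta$ for the first, and verifying closed range plus $N\subseteq R^{\infty}$ for the second).
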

\begin{proof}
  If $\sigma_{A}(x)\cap i\mathbb{R}=\emptyset$ for all $x\in X$,  then
 $$\emptyset=\displaystyle{\bigcup_{x\in X}}(\sigma_A(x)\cap i\mathbb{R})=\displaystyle{\bigcup_{x\in X}}\sigma_A(x)\cap i\mathbb{R}=\sigma_{su}(A)\cap i\mathbb{R}.$$
As $\sigma_{K}(A)\cap i\mathbb{R}\subseteq \sigma_{su}(A)\cap i\mathbb{R}=\emptyset$, then  $\sigma_{K}(A)\cap i\mathbb{R}=\emptyset.$
According to \cite[corollary 2.1]{Mulk},   $(T(t))_{t\geq 0}$ is strongly stable
\end{proof}
\begin{theorem}
Let $A$ be the generator of a bounded strongly continuous semigroup $(T(t))_{t\geq0}$.
Then, the following assertions are equivalent:\begin{enumerate}

                                                \item $(T(t))_{t\geq0}$ is uniformly stable;
                                                \item  for all $x\in X$, there exist $t_0>0$ such that $\sigma_{T(t_0)}(x)\cap \Gamma=\emptyset$
                                              \end{enumerate}
 where $\Gamma$ stands for the unit circle of $\mathbb{C}$.
\end{theorem}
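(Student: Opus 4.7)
The plan is to handle the two directions asymmetrically. For $(1)\Rightarrow(2)$ the route is direct and in fact produces a single $t_0$ that serves every $x$: uniform stability provides $t_0>0$ with $\|T(t_0)\|<1$, so $r(T(t_0))\leq\|T(t_0)\|<1$, whence $\sigma(T(t_0))\subset\{|z|<1\}$, and the inclusion $\sigma_{T(t_0)}(x)\subseteq\sigma(T(t_0))$ finishes the job.

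For $(2)\Rightarrow(1)$ I would rely on three ingredients: (i) boundedness of the semigroup forces $r(T(t))\leq 1$, so $\sigma(T(t))\subseteq\{|z|\leq 1\}$ for every $t\geq 0$; (ii) the identity $\sigma_{su}(S)=\bigcup_{x\in X}\sigma_S(x)$, valid for bounded $S$; (iii) the inclusion $\partial\sigma(S)\subseteq\sigma_{su}(S)$. Taking the union over $x$ in the hypothesis gives $\sigma_{su}(T(t_0))\cap\Gamma=\emptyset$, and any $\mu\in\sigma(T(t_0))\cap\Gamma$ would, by (i), be a boundary point of $\sigma(T(t_0))$ (since the exterior $\{|z|>1\}$ is an open neighbourhood of $\mu$ disjoint from $\sigma(T(t_0))$), hence by (iii) lie in $\sigma_{su}(T(t_0))\cap\Gamma$, a contradiction. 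Thus $\sigma(T(t_0))\subset\{|z|<1\}$ and $r(T(t_0))<1$; writing $t=nt_0+s$ with $s\in[0,t_0)$ and bounding $\|T(t)\|\leq\|T(s)\|\,\|T(t_0)^n\|$ then yields $\|T(t)\|\to 0$.

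What I expect to be the main obstacle is the quantifier order in (2): the hypothesis, as written, permits $t_0$ to depend on $x$, whereas the argument just sketched needs one $t_0$ that serves every $x$ simultaneously (both for the union step and to obtain a spectral-radius estimate on a fixed operator). The cleanest reading takes $t_0$ uniform in $x$; otherwise one must first promote the per-$x$ statement to a uniform one, for instance via a Baire-type argument on the sets $\{x:\sigma_{T(n)}(x)\cap\Gamma=\emptyset\}_{n\in\mathbb{N}}$ combined with a suitable continuity of $t\mapsto\sigma_{T(t)}(x)$. An auxiliary route worth considering is to feed (2) into Theorem \ref{T1} to deduce $\sigma_A(x)\cap i\mathbb{R}=\emptyset$ for every $x$, which supplies strong stability via the preceding theorem; strong stability is, however, strictly weaker than uniform stability, so some additional input would be required to close the gap.
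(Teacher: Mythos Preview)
Your argument is correct and, once the quantifier in (2) is read with a single $t_0$ working for all $x$, complete. The paper makes exactly the same tacit reading: its proof fixes $t_0$ and then takes the union over $x$.

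The route, however, differs from the paper's. Both directions in the paper are outsourced to \cite[Corollary~2.2]{Mulk} and \cite[Theorem~3.2]{Mulko}, which characterise uniform stability of a bounded semigroup by the condition $\sigma_K(T(t_0))\cap\Gamma=\emptyset$ for some $t_0>0$; the paper then only checks that (2) forces this, via the chain $\sigma_K(T(t_0))\subseteq\sigma_{su}(T(t_0))=\bigcup_{x}\sigma_{T(t_0)}(x)$. You instead give a self-contained argument: for $(1)\Rightarrow(2)$ you use $\|T(t_0)\|<1\Rightarrow r(T(t_0))<1$ directly, and for $(2)\Rightarrow(1)$ you combine the same union identity with the inclusion $\partial\sigma\subseteq\sigma_{su}$ and the bound $r(T(t))\leq 1$ (from boundedness of the semigroup) to obtain $r(T(t_0))<1$, then finish with the elementary $t=nt_0+s$ estimate. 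Your approach avoids the external citations entirely at the cost of a few more lines; the paper's is terser but leans on those references. Your explicit discussion of the quantifier issue is a genuine clarification that the paper omits.
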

\begin{proof}
According to \cite[corollary 2.2]{Mulk} and \cite[Theorem 3.2]{Mulko} ,
it suffices to show that  $\sigma_{(T(t_0))}(x)\cap \Gamma=\emptyset$ implies that  $\sigma_{K}(T(t_0))\cap \Gamma=\emptyset$.
Indeed:  If $\sigma_{(T(t_0))}(x)\cap \Gamma=\emptyset$ for all $x\in X$,  then $$\emptyset=\displaystyle{\bigcup_{x\in X}}(\sigma_{T(t_0)}(x)\cap \Gamma)=\displaystyle{\bigcup_{x\in X}}\sigma_{(T(t_0))}(x)\cap \Gamma=\sigma_{su}(T(t_0))\cap \Gamma.$$
As $\sigma_{K}(T(t_0))\cap \Gamma\subseteq \sigma_{su}(T(t_0))\cap \Gamma=\emptyset$, then  $\sigma_{K}(T(t_0))\cap \Gamma=\emptyset.$

\end{proof}

\end{document}